\numberwithin{equation}{section}
\newtheorem {thm}    {Theorem}[section]
\newtheorem {lem}      [thm]    {Lemma}
\newtheorem {cor}  [thm]    {Corollary}
\newtheorem {prop}[thm]    {Proposition}
\newtheorem* {prop*} {Proposition}
\newtheorem*{claim*}   {Claim}
\newtheorem*{conj*} {Conjecture}
\theoremstyle{definition}
\newtheorem {defn} [thm]    {Definition}
\newtheorem {rmk}    [thm]    {Remark}
\newtheorem*{rmk*}  {Remark}
\newtheorem*{qst*} {Question}
\newtheorem* {problem*}{Problem}
\newcounter{AbcT}
\numberwithin{equation}{section}
\newcommand {\supl}   {\sup\limits}
\newcommand {\infl}   {\inf\limits}
\newcommand {\limsupl}   {\limsup\limits}
\newcommand {\E} {{\mathbb E}}
\newcommand {\N} {{\mathbb N}}
\renewcommand {\P}  {{\mathbb P}}
\newcommand {\R} {{\mathbb R}}
\newcommand {\cP} {{\mathcal P}}
\DeclareMathOperator{\supp}{supp}
\newcommand{\eps}{\varepsilon}
\newcommand {\IGNORE}[1]  {}
\newcommand {\La} {{\Lambda}}
\newcommand{\Ga}{\Gamma}
\newcommand{\lb}{\liminf\limits_{n\to\infty}\frac{1}{n}\log{\mu_n}}
\newcommand{\ub}{\limsup\limits_{n\to\infty}\frac{1}{n}\log{\mu_n}}
\begin{document}

	\title[Large deviations for random walks on free products]{Large deviations for irreducible random walks on relatively hyperbolic groups}
	\author[E.~Corso]{Emilio Corso}
	\address[E. C.]{ETH Z\"urich, R\"amistrasse 101
		CH-8092 Z\"urich
		Switzerland}
	\email{emilio.corso@math.ethz.ch}
	\date{\today}
	\keywords{Large deviations, random walks, Gromov-hyperbolic groups, relatively hyperbolic groups, CAT$(0)$ spaces, Teichm\"{u}ller spaces}
	
	\subjclass[2010]{60B15, 60F10, 60G50, 05C81}
	
	\begin{abstract}
		We show existence of the weak large deviation principle, with a convex rate function, for the renormalized distance from the starting point of irreducible random walks on relatively hyperbolic groups. Under the assumption of finiteness of exponential moments, the full large deviation principle holds, and the rate function governing it can be expressed as the Fenchel-Legendre transform of the limiting logarithmic moment generating function of the sequence of renormalized distances. 
	\end{abstract}
	\maketitle
	
	\tableofcontents

	\section{Introduction and main result}
	
	The purpose of this article is to extend the method and the results of~\cite{Corso}, concerning the existence of the large deviation principle for the renormalized distance function of random walks on free products of finitely generated groups, to irreducible random walks on relatively hyperbolic groups.
	
	The notion of a relatively hyperbolic group was first introduced by Gromov in~\cite{Gromov-hyperbolicgroups}, in order to encompass various classes of groups of algebraic and geometric nature, such as fundamental groups of non-compact finite-volume Riemannian manifolds of pinched negative sectional curvature, geometrically finite Kleinian groups, small cancellation quotients of free products, hyperbolic groups and so forth. Gromov's intuition was later elaborated by Bowditch in~\cite{Bowditch} and, in an alternative form, by Farb in~\cite{Farb}. The  different definitions of Bowditch and Farb were shown to be inequivalent by Szczepa\'{n}ski (\cite{Szczepanski}); however, it turns out that Farb's definition of relative hyperbolicity, in conjunction with the Bounded Coset Penetration property (cf.~Remark~\ref{equivalence}) he himself introduced, amounts precisely to Bowditch's definition. The reader is referred to Section~\ref{relhypgroups} for the necessary background on relatively hyperbolic groups.

	In this manuscript, we shall be concerned with asymptotic properties of random walks on relatively hyperbolic groups. For a brief history summarizing the state of the art regarding asymptotic limit theorems for random walks on discrete groups, we refer to the introduction to~\cite{Corso} and the references therein.
	
	\smallskip
	Let $G$ be a finitely generated group, hyperbolic relative to a collection $\Omega$ of peripheral subgroups. Denote by $e$ its identity element. We shall always assume that $G$ is non-elementary, that is, it does not contain any cyclic subgroup of finite index.
	Choose a finite set $S$ generating the group $G$, and select a finite set of representatives $\{H_1,\dots,H_r \}$ for the conjugacy classes inside $\Omega$. Let $\ell\colon G \to [0,\infty)$ be the word length induced on $G$ by the (typically infinite) generating set $S\cup\bigcup_{1\leq i\leq r}H_i$, that is, $\ell$ is defined by
	\begin{equation*}
	\ell(g)\coloneqq \inf\biggl\{n\in \N_{\geq 1}: \text{ there exist }x_1,\dots,x_n\in S\cup S^{-1}\cup\bigcup_{1\leq i\leq r}H_i \text{ s.t. }g=x_1\cdots x_n  \biggr\}\;,
	\end{equation*}
	with the understanding that $\ell(e)=0$, and where $S^{-1}=\{s^{-1}:s\in S \}$.
	
	\smallskip
	Suppose given a probability measure $\mu$ on $G$ viewed as a discrete group; in other words, $\mu\colon G \to [0,1]$ is a function satisfying $\sum_{g\in G}\mu(g)=1$. Then $\mu$ defines a random walk on $G$ obtained by successively multiplying elements of $G$ drawn randomly and independently according to $\mu$. More precisely, let $(X_{i})_{i\geq 1}$ be a sequence of independent $G$-valued random variables distributed according to $\mu$. Define a stochastic process $(Y_{n})_{n\in \N}$ by setting $Y_0\coloneqq e$, $Y_{n}=X_1\cdots X_n$ for every integer $n\geq 1$. Then $(Y_{n})_{n\in \N}$ is called a right random walk on $G$, issued from the identity, with increments distributed according to the law $\mu$. Alternatively, $(Y_{n})_{n\in \N}$ can be defined as the Markov chain on $G$ issued from the identity with transition matrix $Q=(q(x,y))_{x,y \in G}$ given by $q(x,y)=\mu(x^{-1}y)$ for every $x,y\in G$. Henceforth, we shall denote by $\P$ the law of the process $(Y_n)_{n\in \N}$ on the Borel space $(G^{\N},\cP(G)^{\otimes \N})$, where $\cP(G)$ is the power set of $G$. 
	
	\medskip
	We are interested in large deviation estimates for the process $\bigl(\frac{1}{n}\ell(Y_n)\bigr)_{n\geq 1}$ of renormalized lengths of the random walk. By virtue of Kingman's sub-additive ergodic theorem (\cite{Kingman}), it is well-known that, whenever $\mu$ has finite first moment\footnote{We say that $\mu$ has a finite first moment if $\int_{G}\ell(g)\text{d}\mu(g)<\infty$, and has a finite second moment if $\int_{G}\ell(g)^{2}\text{d}\mu(g)<\infty$.}, then $\frac{1}{n}\ell(Y_n)$ converges almost surely to the \emph{escape rate} $\lambda\in \R_{\geq 0}$ of the random walk, defined as $\lambda=\lim_n\frac{1}{n}\E[\ell(Y_n)]$ (see also~\cite{Guivarch}). Both the existence of the latter limit and the almost-sure convergence hinge upon the sub-additive nature of the length function $\ell$, that is, on the fact that $\ell(gh)\leq \ell(g)+\ell(h)$ for any $g,h \in G$.  
	
	It is worth mentioning that a central limit theorem has also been established in the context of relative, and more generally acylindrically hyperbolic groups by Mathieu and Sisto (\cite{Mathieu-Sisto}),
	whenever $\mu$ has a finite second moment. For local limit theorems, we refer instead to the work of Dussaule in~\cite{Dussaule,Dussaule-two}.
	
	As almost-sure convergence implies convergence in probability, it follows from the aforementioned analogue of the strong law of large numbers that $\P(|\ell(Y_n)-n\lambda|\geq \delta n)$ tends to zero as $n$ tends to infinity, for any $\delta>0$. The large deviation principle (cf.~Definition~\ref{LDP}) quantifies the exponential rate of decay of the probability of such rare events.
	
\smallskip
 Let us call a probability measure $\mu$ on $G$ \emph{admissible} if the random walk $(Y_{n})_{n\geq 0}$ it generates on $G$ is irreducible: for any $g\in G$ there exists an integer $n\geq 1$ such that $\P(Y_{n}=g)>0$. Equivalently, $\mu$ is admissible if its topological support $\supp{\mu}=\{g\in G:\mu(g)>0 \}$ generates $G$ as a semigroup.
 
 \smallskip
 The following is the main result of this article. 
	
	\begin{thm}
	\label{main}
		Let $G$ be a finitely generated group, hyperbolic relative to a collection\linebreak $\Omega$ of peripheral subgroups. Assume $G$ is non-elementary. Let $S\subset G$ be a finite set generating $G$, $\{H_1,\dots,H_r\}$ a complete set of representatives of the conjugacy classes in $\Omega$, $\ell$ the word length determined by the union $S\cup \bigl(\bigcup_{1\leq i\leq r}H_i\bigr)$. Suppose $\mu$ is an admissible probability measure on $G$, and let $(Y_{n})_{n\geq 0}$ be a right random walk on $G$ issued from the identity with increments distributed according to $\mu$. 
		Then the sequence of random variables 
			$\bigl(\frac{1}{n}\ell(Y_n)\bigr)_{n\geq 1}$
			satisfies the weak large deviation principle with a convex rate function $I\colon \R_{\geq 0}\to [0,\infty]$.
		\end{thm}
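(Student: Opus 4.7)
The plan is to adapt to relatively hyperbolic groups the subadditivity-based method developed in~\cite{Corso} for random walks on free products, the additional ingredient being the geometric control on the word length $\ell$ afforded by the Farb--Bowditch theory recalled in Section~\ref{relhypgroups}. Define the candidate rate function $I\colon\R_{\geq 0}\to[0,\infty]$ by
\[
I(x)\coloneqq -\lim_{\eps\to 0^+}\liminf_{n\to\infty}\frac{1}{n}\log\P\Bigl(\bigl|\tfrac{\ell(Y_n)}{n}-x\bigr|<\eps\Bigr).
\]
The core of the proof is a single \emph{convex concatenation estimate}: there exists an integer $k$, depending on $G$ and $\mu$ only, and, for every $x,y\in\R_{\geq 0}$, every $\lambda\in(0,1)$ and every $\eps>0$, a constant $c=c(x,y,\lambda,\eps)>0$ such that for all sufficiently large $m,n$ with $m/(m+n)$ close to $\lambda$,
\[
\P\Bigl(\bigl|\tfrac{\ell(Y_{m+n+k})}{m+n+k}-(\lambda x+(1-\lambda)y)\bigr|<\eps\Bigr)\geq c\,\P\Bigl(\bigl|\tfrac{\ell(Y_m)}{m}-x\bigr|<\tfrac{\eps}{2}\Bigr)\P\Bigl(\bigl|\tfrac{\ell(Y_n)}{n}-y\bigr|<\tfrac{\eps}{2}\Bigr).
\]
Convexity of $I$ follows immediately from this inequality upon taking $\log$, dividing by $m+n+k$ and passing to the limit, while existence of the defining limit follows from the same estimate in the diagonal case $x=y$ by a Fekete-type manipulation (absorbing the $\eps/2$-to-$\eps$ scale shift into a standard monotonicity argument in $\eps$).

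The obstacle to the convex concatenation estimate is that the trivial sub-additivity $\ell(Y_{m+n})\leq \ell(Y_m)+\ell(Y_m^{-1}Y_{m+n})$ does not preclude massive cancellation: two long trajectories may fellow-travel deep into a common peripheral coset, so that $\ell$ of their product is much smaller than the sum of the two lengths. To rule this out I would construct a \emph{Schottky bridge}: a fixed element $h\in G$ that is loxodromic for the action on the Bowditch hyperbolic cusped space and whose quasi-geodesic axis stays uniformly far from every peripheral coset. Non-elementariness of $G$ guarantees the existence of $h$, and a Morse-type lemma (a standard manifestation of the Bounded Coset Penetration property, cf.\ Remark~\ref{equivalence}) furnishes a constant $C=C(h)$ with
\[
\ell(g\,h\,g')\geq \ell(g)+\ell(g')-C\qquad\text{for all }g,g'\in G.
\]
Admissibility of $\mu$ in turn produces an integer $k$ and a word $(x_1,\dots,x_k)\in(\supp\mu)^k$ with $x_1\cdots x_k=h$ and $p_h\coloneqq \prod_{i=1}^k\mu(x_i)>0$.

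Conditioning on the event $E_h=\{X_{m+1}=x_1,\dots,X_{m+k}=x_k\}$, which has probability $p_h$ and is independent of both $Y_m$ and $W\coloneqq X_{m+k+1}\cdots X_{m+k+n}$, one writes $Y_{m+k+n}=Y_m\cdot h\cdot W$ on $E_h$; the Morse lower bound combined with the trivial upper bound $\ell(Y_m\cdot h\cdot W)\leq \ell(Y_m)+\ell(h)+\ell(W)$ gives $\ell(Y_{m+k+n})=\ell(Y_m)+\ell(W)+O(1)$ on $E_h$, which together with independence delivers the convex concatenation inequality after a short calculation. The main obstacle throughout is thus the geometric production of the bridge $h$ with its Morse estimate, which is precisely where the Farb--Bowditch machinery, rather than mere sub-additivity of $\ell$, enters essentially. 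Assembly of the weak LDP from the convex concatenation inequality — lower bound on open sets by a ball-exhaustion argument, upper bound on compact sets by a finite ball-covering argument combined with convexity (hence lower semicontinuity) of $I$ — is then routine.
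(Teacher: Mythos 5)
Your overall architecture --- an almost-additive concatenation estimate obtained by inserting a short bridging word that the walk realizes with positive probability, followed by a Lanford-type assembly of the weak LDP and of mid-point convexity --- is the same as the paper's. However, the geometric claim on which your whole argument rests is false: there is no single element $h\in G$ satisfying $\ell(ghg')\geq \ell(g)+\ell(g')-C$ for \emph{all} $g,g'\in G$. Taking $g'=h^{-1}g^{-1}$ gives $\ell(ghg')=\ell(e)=0$, while $\ell(g)+\ell(g')\geq 2\ell(g)-\ell(h)$ is unbounded; this already fails in a free group, i.e.\ in the simplest relatively hyperbolic group, and no loxodromic or Morse property of $h$ can repair it, since the obstruction is that $g'$ may be chosen adversarially to cancel $gh$ entirely. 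What is true --- and is exactly Proposition~\ref{boundeddistortion} --- is that for each \emph{pair} $(g,g')$ there exists some $\sigma=\sigma_{g,g'}$ in the fixed finite ball $B^{G}(e,C)$ with $\ell(g\sigma g')\geq \ell(g)+\ell(g')-c$: the bridge must depend on the pair, and only its size is uniform.

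This dependence is not cosmetic; it is precisely what forces the additional probabilistic step in the paper (Lemma~\ref{selection}), where one pigeonholes over the finitely many admissible bridges to extract, for each block number $k$, a common sequence $\sigma_1,\dots,\sigma_{k-1}$ and a set $E_k$ of $k$-tuples, of measure at least $|B^{G}(e,C)|^{-(k-1)}$ times the unconditioned measure, on which additivity holds up to $(k-1)c$. The resulting exponential loss $|B^{G}(e,C)|^{-(k-1)}\overline{p}^{\,k-1}$ must then be absorbed by taking the block length $n_{j_0}$ large compared with $\eta^{-1}\bigl(\log|B^{G}(e,C)|-\log\overline{p}\bigr)$, and the non-lacunary sequence of concatenation times $kn_{j_0}+t(\sigma_1)+\cdots+t(\sigma_{k-1})$ handled via Lemma~\ref{lowerbound} before invoking the abstract criterion of Proposition~\ref{criterion}. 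Your device of conditioning on a fixed word $(x_1,\dots,x_k)\in(\supp\mu)^k$ representing a given bridge is the right way to make the walk realize that bridge (the paper does the same with $t(\sigma)$ and $p(\sigma)$), but you must first select \emph{which} bridge to insert as a function of the trajectory and control the probability cost of that selection. As written, the proposal does not go through.
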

		
		Observe that both the validity of the weak large deviation principle and the convexity of the rate function hold irrespective of any moment assumption on $\mu$. The hypothesis placed on its support is only of algebraic type. On the other hand, standard arguments from the abstract theory of large deviations enable us to get a full large deviation principle, as well as a more precise description of the rate function, provided that $\mu$ has finite exponential moments. Recall that $\mu$ is said to have a finite exponential moment if there exists $\tau>0$ such that $\int_{G}\exp{(\tau\ell(g))}\;\text{d}\mu(g)$ is finite, and has a finite moment generating function if $\int_{G}\exp{(\tau\ell(g))}\;\text{d}\mu(g)$ is finite for every $\tau>0$.
		
		\smallskip
		 As a consequence of Theorem~\ref{main}, we thus get:
		
		\begin{cor}
		\label{expmoments}	
			Under the assumptions of Theorem~\ref{main}, the following hold:
			\begin{enumerate}
			\item if $\mu$ has a finite exponential moment, then $I$ is topologically a proper function and the sequence $\bigl(\frac{1}{n}\ell(Y_n)\bigr)_{n\geq 1}$ satisfies the full large deviation principle with rate function $I$;
			\item If $\mu$ has a finite moment generating function, then $I$ is the Fenchel-Legendre transform of the limiting logarithmic moment generating function of the sequence $\bigl(\frac{1}{n}\ell(Y_{n})\bigr)_{n\geq 1}$.
		\end{enumerate} 
	\end{cor}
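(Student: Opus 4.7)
My plan is to deduce both statements from Theorem~\ref{main} using standard abstract large deviation machinery; the only genuine additional input required is exponential tightness of the family $\bigl(\frac{1}{n}\ell(Y_n)\bigr)_{n\geq 1}$, which is precisely what the finiteness of an exponential moment buys.

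To prove Part (1), the first step is to establish exponential tightness by a direct Chernoff argument. Since $\ell$ is subadditive, $\ell(Y_n)\leq \sum_{i=1}^{n}\ell(X_i)$, and the independence of the increments then yields
\begin{equation*}
\E\bigl[e^{\tau \ell(Y_n)}\bigr]\leq \Bigl(\int_{G}e^{\tau \ell(g)}\der\mu(g)\Bigr)^{n}
\end{equation*}
for any $\tau>0$ in the range of finiteness of the exponential moment. Markov's inequality then gives
\begin{equation*}
\limsupl_{n\to\infty}\frac{1}{n}\log \P\bigl(\tfrac{1}{n}\ell(Y_n)\geq M\bigr)\leq -\tau M+\log\int_{G}e^{\tau \ell(g)}\der\mu(g)\;,
\end{equation*}
which diverges to $-\infty$ as $M\to\infty$. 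Since the intervals $[0,M]$ are compact in $\R_{\geq 0}$, this is exponential tightness. Invoking the standard principle that weak LDP plus exponential tightness implies the full LDP with a good rate function (e.g.~\cite[Lemma 1.2.18]{Dembo-Zeitouni} — a reference I would add), the rate function $I$ of Theorem~\ref{main} must then have compact sublevel sets, hence is topologically proper, and governs the full LDP.

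For Part (2), under the stronger hypothesis of finite moment generating function, the same Chernoff bound shows that $\E[e^{\lambda \ell(Y_n)}]$ is finite for every $\lambda\in \R$ and, more importantly, that the Varadhan-type tail integrability condition $\limsupl_{n}\frac{1}{n}\log\E[e^{n\gamma \cdot \frac{\lambda}{n}\ell(Y_n)}]<\infty$ holds for some $\gamma>1$, uniformly in $n$. Applying Varadhan's lemma to the full LDP from Part (1) with the continuous affine test function $x\mapsto \lambda x$ gives
\begin{equation*}
\Lambda(\lambda)\coloneqq \liml_{n\to\infty}\frac{1}{n}\log \E\bigl[e^{\lambda \ell(Y_n)}\bigr]=\supl_{x\geq 0}\bigl(\lambda x-I(x)\bigr)=I^{*}(\lambda)\;,
\end{equation*}
so the limiting logarithmic moment generating function exists and equals the Fenchel-Legendre transform of $I$. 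Since $I$ is convex by Theorem~\ref{main} and lower semicontinuous as a rate function, the Fenchel-Moreau biconjugation theorem then yields $I=I^{**}=\Lambda^{*}$, completing Part (2).

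The hard part in this argument lies entirely in Theorem~\ref{main}, not in its corollary: once the weak LDP with a convex rate function is in hand, the sole non-formal verification is the tail condition in Varadhan's lemma, and this reduces to the one-line subadditivity estimate above. Consequently both conclusions follow essentially automatically from Theorem~\ref{main} together with the respective moment hypotheses.
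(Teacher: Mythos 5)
Your argument is correct and coincides with the route the paper takes (deferring to \cite[Prop.~4.4, Sec.~5.2]{Corso}): exponential tightness via the subadditive Chernoff bound upgrades the weak LDP of Theorem~\ref{main} to the full LDP with a good rate function, and the identification $I=\Lambda^{*}$ is exactly Proposition~\ref{momentgen}, i.e.\ Varadhan's lemma combined with Fenchel--Moreau duality applied to the convex, lower semicontinuous $I$. No gaps.
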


Section~\ref{sectionLDP} contains an explanation of the terminology adopted in the statement.

\smallskip
The deduction of Corollary~\ref{expmoments} from~Theorem~\ref{main} does not rely on any distinctive feature of the underlying group, and has been carried out in~\cite[Prop.~4.4]{Corso} and~\cite[Sec.~5.2]{Corso}; we don't reproduce it here. For further properies of the rate function, we also refer to~\cite[Sec.~5.1]{Corso}. 

\begin{rmk}
\label{moregeneral}
	\begin{enumerate}
	\item The validity of Theorem~\ref{main} merely hinges upon the geometric property of the word distance phrased in Proposition~\ref{boundeddistortion}, which is shared by several other classes of geodesic metric spaces on which a non-elementary discrete group acts by isometries with contracting elements\footnote{The author wishes to thank W.~Yang for drawing his attention to this fact.} (cf.~\cite[Lem.~2.14]{Yang}). As a consequence, our main result applies to irreducible random walks in such more general circumstances as well. By way of example, we mention the following (see the introduction to~\cite{Yang} and the references therein):
	\begin{enumerate}
		\item groups acting properly and cocompactly on Gromov-hyperbolic metric spaces;
		\item groups acting properly and cocompactly on CAT$(0)$ spaces with rank-one elements;
		\item relatively hyperbolic groups acting on their Cayley graph with respect to a finite generating set;
		\item mapping class groups of closed orientable surfaces of genus at least two acting on the associated Teichm\"{u}ller space endowed with the Teichm\"{u}ller metric.
	\end{enumerate}
	A large deviation principle for non-elementary random walks on Gromov-hyperbolic metric spaces has been previously established by Boulanger, Mathieu, Sert and Sisto in~\cite{Boulanger-Mathieu-Sert-Sisto}. Of the four classes of examples mentioned above, (b),(c) and (d) are not covered by their result~\cite[Thm.~1.2]{Boulanger-Mathieu-Sert-Sisto}; to the best of the author's knowledge, the large deviation principle for them appears to be new. 
	\item A corollary of~\cite[Thm.~1.2]{Boulanger-Mathieu-Sert-Sisto} is a large deviation principle for random walks on the \emph{coned-off} Cayley graph of a relatively hyperbolic group (cf.~the appendix to~\cite{Osin}). As shown in~\cite[Lem.~6.8]{Osin}, the coned-off Cayley graph is only quasi-isometric to the Cayley graph $\text{Cay}\bigl(G,S\cup \bigcup_{1\leq i\leq r}H_i\bigr)$ we are considering; as a consequence, it is not possible to deduce Theorem~\ref{main} directly from~\cite[Thm.~1.2]{Boulanger-Mathieu-Sert-Sisto}, nor viceversa. 
	
	For Gromov-hyperbolic groups, that is, when the collection of peripheral subgroups reduces to the identity subgroup, the two Cayley graphs coincide. In this case, our approach provides a streamlined argument for the large deviation principle, in comparison to the proof of~\cite[Thm.~1.2]{Boulanger-Mathieu-Sert-Sisto}; on the other hand, the latter yields finer information on the rate function, notably uniqueness of the zero.
	\end{enumerate}
\end{rmk}

\subsection{An outline of the proof} The proof of Theorem~\ref{main} proceeds along the same lines as the proof of~\cite[Thm.~1.4]{Corso}, drawing inspiration from Lanford's indirect argument (\cite{Lanford}) for Cramer's theorem on the large deviation principle for empirical means $\frac{1}{n}\sum_{i=1}^{n}X_i$ of i.i.d.~real random variables. As explained in~\cite[Sec.~1.1]{Corso}, the latter relies fundamentally on the additivity property of the partial sums $\sum_{i=1}^{n}X_i$. In our case, the word length $\ell$ is only subadditive, that is, $\ell(X_1\cdots X_{n+m})\leq\ell(X_1\cdots X_n)+\ell(X_{n+1}\cdots X_{n+m})$ for every $n,m\geq 1$. It is precisely this lack of additivity that constitutes the main hurdle when attempting to adapt Lanford's proof to renormalized lengths of random walks on finitely generated groups.

Nevertheless, random walks on relatively hyperbolic groups possess the feature that successive steps tend to be almost aligned (that is, the length function is almost additive) with sufficiently high probability on an exponential scale, in a way which is formally expressed by Lemma~\ref{selection}; see also the deviation inequalities proven in~\cite{Mathieu-Sisto}, which point to the same phenomenon. We derive this property from a well-known geometric characteristic of Cayley graphs of relatively hyperbolic groups, phrased in Proposition~\ref{boundeddistortion}. Existence of the large deviation principle, as well as convexity of the rate function, is then achieved through classical arguments from the general theory of large deviations, notably Proposition~\ref{criterion}.

\subsection*{Acknowledgments}
The author is thankful to \c{C}agri Sert for pointing out the crucial geometric ingredient (Proposition~\ref{boundeddistortion}) of the proof of Theorem~\ref{main}, and acknowledges his indebtedness to Wenyuan Yang for numerous valuable observations.

\section{Preliminaries on relatively hyperbolic groups and large deviations}

\subsection{Relatively hyperbolic groups}
\label{relhypgroups}

We begin this preliminary section by reviewing the notion of a relatively hyperbolic group. Several equivalent definitions of a relatively hyperbolic group can be given (see~\cite{Bowditch,Farb,Osin}), of which the most convenient for our purposes is combinatorial in nature, and was introduced by Farb in~\cite{Farb}. We follow Osin's treatment, as presented  in~\cite{Dussaule}.

Let $G$ be a finitely generated group and $\Omega$ a collection of subgroups of $G$. Assume that $\Omega$ is closed under conjugation by elements of $G$ and admits a finite number of conjugacy classes. Choose a finite set $\Omega_0=\{H_1,\dots,H_r \}\subset \Omega$ of representatives of such conjugacy classes. 

Let $S\subset G$ be a finite  set generating $G$, and denote by $\text{Cay}(G,S)$ the associated Cayley graph (cf.~\cite[Chap.~IV]{deLaHarpe}). We shall also consider the Cayley graph $\text{Cay}\bigl(G,S\cup\bigcup_{1\leq i\leq r}H_i\bigr)$ with respect to the generating set obtained by adding to $S$ all elements of the subgroups $H_i$; this is sometimes referred to in the literature as the \emph{relative} Cayley graph of $G$. 

It is well-known (see, for instance, \cite{Bridson-Haefliger}) that any undirected graph can be endowed with the structure of a complete geodesic metric space, for which all edges in the graph are isometrically isomorphic to the unit interval $[0,1]$, endowed with the Euclidean distance. We shall denote by $\Ga$ the metric space associated in this way to $\text{Cay}\bigl(G,S\cup\bigcup_{1\leq i\leq r }H_i\bigr)$.

Henceforth, we denote by $d$ the graph distance in $\text{Cay}\bigl(G,S\cup\bigcup_{1\leq i\leq r }H_i\bigr)$; this is the distance\linebreak we shall always consider for the purposes of establishing the large deviation principle (cf.~ Proposition~\ref{boundeddistortion}). Equipped with this distance, the Cayley graph embeds isometrically into $\Ga$. The graph distance in $\text{Cay}(G,S)$ will be indicated with $d'$ instead. 

\smallskip
Recall that a geodesic metric space $(X,d)$ is called hyperbolic if there exists $\delta>0$ such that, for any geodesic triangle in $X$, each edge is contained in the closed $\delta$-neighborhood of the union of the other two edges (cf.~\cite{Gromov-hyperbolicgroups,Bridson-Haefliger,deLaHarpe}).

We say that the group $G$ is \emph{weakly relatively hyperbolic} with respect to the collection of subgroups $\Omega$ if $\Ga$ is a Gromov-hyperbolic metric space. The definition does not depend neither on the choice of $\Omega_0$ nor of $S$. 

\smallskip
The definition of relative hyperbolicity builds additionally upon the Bounded Coset Penetration property, which we now address for the sake of completeness, referring to~\cite{Osin} for its geometric significance\footnote{We just remark here that mapping class groups are weakly relatively hyperbolic, as shown by Masur and Minsky in~\cite{Masur-Minsky} through the study of their action on the curve complex of the associated surface, but do not satisfy the Bounded Coset Penetration property. }. In what follows, a \emph{path} in $\Ga$ is a finite sequence of adjacent vertices\linebreak in $\text{Cay}\bigl(G,S\cup\bigcup_{1\leq i\leq r}H_i\bigr)$. Given real numbers $\lambda,c>0$, a \emph{relative $(\lambda,c)$-quasi geodesic path} is a path $\alpha=(\gamma_1,\dots,\gamma_n)$ which is a $(\lambda,c)$-quasi geodesic, that is, it satisfies
\begin{equation*}
\lambda^{-1}|k-l|-c\leq d(\gamma_k,\gamma_l)\leq \lambda|k-l|+c \quad \text{ for any }k,l\in \{1,\dots,n \}.
\end{equation*} 
We say that a path $\alpha$ \emph{enters} the left coset $gH_i$ if it has a vertex which is an element of $gH_i$ and which is followed by an edge corresponding to an element of $H_i$. For such a path, consider a maximal subpath consisting of vertices in $gH_i$ and whose edges are labelled with elements of $H_i$. We call such a subpath a $H_i$-\emph{component}. The entering (resp.~exit) point of the $H_i$-component is the first (resp.~last) vertex of such a subpath; we say that the subpath \emph{leaves} $gH_i$ at the exit point. 
Moreover, given a non-negative integer $r$, the path is said to \emph{travel more} than $r$ in $gH_i$ if the distance in $\text{Cay}(G,S)$ between the entering and the exit point is larger than $r$.

Lastly, a path is \emph{without backtracking} if it never goes back to a coset $gH_i$ once it has left it.
 
\smallskip
By definition, the pair $(G,\Omega)$ satisfies the Bounded Coset Penetration property if, for all $\lambda,c>0$, there exists a constant $C_{\lambda,c}$ such that, for every pair $(\alpha_1,\alpha_2)$ of relative $(\lambda,c)$-quasi geodesic paths without backtracking and with common end points, the following hold:

\begin{itemize}
	\item[--] if $\alpha_1$ travels more than $C_{\lambda,c}$ in a coset $gH_i$, then $\alpha_2$ enters $gH_i$;
	\item[--] if $\alpha_1$ and $\alpha_2$ enter the same coset $gH_i$, the two entering points and the two end points are $C_{\lambda,c}$-close to each other in $\text{Cay}(G,S)$.  
\end{itemize} 

We can now finally give:

\begin{defn}
	We say that the group $G$ is relatively hyperbolic with respect to $\Omega$ if it is weakly relatively hyperbolic with respect to $\Omega$ and the pair $(G,\Omega)$ satisfies the Bounded Coset Penetration property.
\end{defn}

Following the literature, we refer to the elements of the collection $\Omega$ as peripheral subgroups.

\begin{rmk}
\label{equivalence}
	The definition we just gave is equivalent to the one given by Bowditch in~\cite{Bowditch}, characterizing relatively hyperbolic groups in terms of properly discontinuous isometric actions on proper Gromov-hyperbolic metric spaces. For the equivalence, see~\cite{Bowditch} or~\cite{Osin}.

\end{rmk}

For the reader's convenience, we now present a short list of classical examples of relatively hyperbolic groups, taken from the introduction to~\cite{Osin} (to which we refer for further examples).

\begin{enumerate}
	\item Let $M$ be a complete, connected, finite-volume non-compact Riemannian manifold with pinched negative sectional curvature $K$, namely satisfying $-b^{2}\leq K\leq -a^{2}$ for some real numbers $a,b>0$. Then the fundamental group $\pi_1(M)$ of $M$ is relatively hyperbolic with respect to the collection of cusp subgroups (cf.~\cite{Eberlein,Farb}). Examples include non-uniform lattices in real simple Lie groups of real rank one.
	\item Let $G$ be a $C'(1/6)$-small cancellation quotient (cf.~\cite[Chap.~V]{Lyndon-Schupp}) of the free product of groups $G_1,\dots,G_r$. Then $G$ is hyperbolic relative to the collection of the canonical images of the subgroups $G_i$ in $G$; for the reason, we refer to~\cite{Osin}.
	\item Let $G$ be a Gromov-hyperbolic group, and assume $H_1,\dots,H_r$ are quasi-convex subgroups of $G$. Suppose that the cardinality $|gH_ig^{-1}\cap H_j|$ is finite whenever $i\neq j$ or $g\notin H_i$. Then $G$ is hyperbolic relative to the collection $H_1,\dots,H_r$: for this, see~\cite{Farb} and~\cite[Thm.~7.11]{Bowditch}. In particular, every Gromov-hyperbolic group is relatively hyperbolic with respect to the trivial subgroup.
\end{enumerate}

\subsection{The large deviation principle}
\label{sectionLDP}
 This subsection sets forth the necessary terminology of the theory of large deviations, which we adopt throughout the manuscript. We also recall a general criterion to establish existence of the large deviation principle, which will be put to good use in Section~\ref{proofmain} for the proof of Theorem~\ref{main}. We refer the reader to~\cite{De-Ze} for a thorough treatment of the subject.

\smallskip
Let $X$ be a Hausdorff regular topological space, $I\colon X\to[0,\infty]$ a lower semicontinuous function, $(\mu_n)_{n\geq 1}$ a sequence of Borel probability measures on $X$.
\begin{defn}
	\label{LDP}
	We say that the sequence $(\mu_n)_{n\geq 1}$ satisfies the full large deviation principle (LDP for short) with rate function $I$ if, for any Borel measurable set $\La\subset X$, 
	\begin{equation}
	\label{ldp}
	-\infl_{x\in \La^{\circ}}I(x)\leq \lb(\La)\leq \ub(\La)\leq-\infl_{x \in \overline{\La}}I(x)\;,
	\end{equation}
	where $\La^{\circ}$ and $\overline{\La}$ denote the interior and the closure of $\La$, respectively.
	
	If~\eqref{ldp} only holds for any measurable $\Lambda$ with compact closure, the sequence is said to satisfy the weak LDP with rate function $I$. 
\end{defn}

The following general principle allows to establish existence of the weak LDP in an indirect way, that is, without knowing the rate function in advance.  

\begin{prop}[{\cite[Thm.~4.1.11]{De-Ze}}]
	\label{criterion}
	Let $(\mu_n)_{n\geq 1}$ be a sequence of Borel probability measures on $X$. The function $I\colon X \to[0,\infty]$ defined as
	\begin{equation}
	\label{suplower}
	I(x)\coloneqq\supl_{x\in V\emph{open}}-\lb(V)\;, \quad x \in X.
	\end{equation}
is lower semicontinuous. Moreover, if 
	\begin{equation}
	\label{supupper}
	I(x)=\supl_{x\in V\emph{open}}-\ub(V)
	\end{equation}
	for all $x\in X$, then the sequence $(\mu_n)_{n\geq 1}$ satisfies the weak LDP with rate function $I$.
\end{prop}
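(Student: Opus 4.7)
The statement is a purely topological-measure-theoretic fact about sequences of probability measures on a Hausdorff regular space, independent of the group-theoretic setting of the rest of the paper. My plan is to establish the three assertions in turn: lower semicontinuity of $I$, the large deviation lower bound on open sets (which is essentially built into the definition~\eqref{suplower}), and then the weak upper bound on sets with compact closure, where the added hypothesis~\eqref{supupper} together with compactness will do the real work.

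For lower semicontinuity, I would verify directly that every superlevel set $\{x\in X:I(x)>\alpha\}$ is open. If $I(x)>\alpha$, the supremum in~\eqref{suplower} produces an open neighborhood $V\ni x$ with $-\lb(V)>\alpha$; but then for any $y\in V$ the same $V$ is admissible in the supremum defining $I(y)$, so that $I(y)\geq -\lb(V)>\alpha$, whence $V\subset\{I>\alpha\}$. The large deviation lower bound then reads off immediately: for any open $G\subset X$ and any $x\in G$, taking $V=G$ in~\eqref{suplower} gives $I(x)\geq -\lb(G)$, so $\infl_{x\in G}I(x)\geq -\lb(G)$, i.e.~$\lb(G)\geq -\infl_{x\in G}I(x)$. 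Applied with $G=\La^{\circ}$, this yields the open-set half of~\eqref{ldp} for any Borel $\La$.

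The core of the argument is the upper bound on a measurable set $\La$ with compact closure $K=\overline{\La}$. Fix any $\alpha<\infl_{x\in K}I(x)$. For every $x\in K$, hypothesis~\eqref{supupper} supplies an open neighborhood $V_x\ni x$ such that $\ub(V_x)<-\alpha$. By compactness of $K$, extract a finite subcover $V_{x_1},\dots,V_{x_N}$; then $\mu_n(\La)\leq\mu_n(K)\leq\sum_{i=1}^{N}\mu_n(V_{x_i})\leq N\maxl_{1\leq i\leq N}\mu_n(V_{x_i})$. The elementary log-sum inequality $\tfrac{1}{n}\log\bigl(\sum_i a_{i,n}\bigr)\leq\tfrac{\log N}{n}+\maxl_{i}\tfrac{1}{n}\log a_{i,n}$, combined with $\tfrac{\log N}{n}\to 0$, gives
\begin{equation*}
\ub(\La)\;\leq\;\maxl_{1\leq i\leq N}\ub(V_{x_i})\;<\;-\alpha.
\end{equation*}
Letting $\alpha\nearrow\infl_{x\in K}I(x)$ concludes the weak upper bound.

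The main subtlety lies in the asymmetry between~\eqref{suplower} and~\eqref{supupper}: since $\lb\leq\ub$ always, the tautological inequality $\supl_{V\ni x}-\ub(V)\leq\supl_{V\ni x}-\lb(V)$ runs in the wrong direction, so~\eqref{supupper} is a genuine additional hypothesis whose pointwise nature must be upgraded to a uniform bound over $K$ through the finite subcover. This is exactly what limits the conclusion to a \emph{weak} LDP: strengthening it to the full LDP on closed sets would require an exponential tightness assumption to reduce from closed to compact, a reduction that is not available at the level of generality of this proposition.
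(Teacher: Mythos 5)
Your proof is correct and is essentially the standard argument for this result, which the paper does not prove but simply cites from Dembo--Zeitouni (Thm.~4.1.11): lower semicontinuity via openness of superlevel sets, the lower bound read off tautologically from~\eqref{suplower}, and the weak upper bound via a finite subcover of the compact closure together with the union bound and the fact that $\frac{1}{n}\log N\to 0$. Your closing remark correctly identifies why~\eqref{supupper} is a genuine hypothesis and why the conclusion is only the weak LDP.
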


Once a LDP has been shown via Proposition~\ref{criterion}, it is nevertheless desirable to obtain a more explicit expression for the rate function. This can be achieved by means of the classical theory of convex conjugate functions. Suppose $X$ is a locally convex, Hausdorff topological vector space over $\R$, and denote by $X^{*}$ its topological dual. Assume that a sequence $(\mu_n)_{n\geq 1}$ of Borel probability measures on $X$ satisfies the LDP with a proper, convex rate function $I$. Define the logarithmic moment generating function of the measure $\mu_n$, for each integer $n\geq 1$, as the function $\La_{n}\colon X^{*} \to (-\infty,\infty]$ given by
\begin{equation*}
\La_{n}(\varphi)=\log{\int_{X}e^{\langle \varphi,x\rangle }}d\mu_n(x) \quad\text{for all }\varphi\in X^{*},
\end{equation*}
where $\langle \cdot,\cdot \rangle$ denotes the standard dual pairing between $X^{*}$ and $X$. The limiting logarithmic moment generating function of the sequence $(\mu_n)_{n\geq 1}$ is then defined as
\begin{equation*} 
\La(\varphi)=\limsupl_{n\to\infty}\frac{1}{n}\;\La_n(n\varphi) \in (-\infty,\infty]\quad\text{for all }\varphi \in X^{*}.
\end{equation*}

Also, given a function $f\colon X \to (-\infty,\infty]$, not identically infinite, we define its Fenchel-Legendre transform $f^{*}\colon X^{*}\to (-\infty,\infty]$ as 
\begin{equation*}
f^{*}(\varphi)=\supl_{x\in X}\{\langle \varphi,x\rangle -f(x) \} \quad\text{for all }\varphi\in X^{*}. 
\end{equation*}
If $g\colon X^{*}\to (-\infty,\infty]$ is a function defined on the dual space, its Fenchel-Legendre transform $g^{*}$ will be considered as a function defined just on $X$, rather than on the entire bidual $X^{**}$.  

Varadhan's integral lemma (\cite[Thm.~4.3.1]{De-Ze}), coupled with Fenchel-Moreau's duality theorem (\cite[Thm.~1.11]{Brezis}), yields the following characterization of the rate function:

\begin{prop}[{\cite[Thm.~4.5.10]{De-Ze}}]
	\label{momentgen}
	Let $(\mu_n)_{n\geq 1}$ be a sequence of Borel probability measures on a locally convex, Hausdorff topological vector space $X$. Assume that:
	\begin{enumerate}
		\item  the limiting logarithmic moment generating function $\La \colon X^{*}\to (-\infty,\infty]$ of the sequence $(\mu_n)_{n\geq 1}$ is finite for every $\varphi\in X^{*}$;		 
		\item  the sequence $(\mu_{n})_{n\geq 1}$ satisfies the full LDP with a proper, convex rate function $I$. 
	\end{enumerate} 
	Then the rate function $I$ is the Fenchel-Legendre transform of $\La$, that is,
	\begin{equation*}
	I(x)=\supl_{\varphi \in X^{*}}\{\langle \varphi,x\rangle -\La(\varphi)  \} \text{ for every }x\in X.
	\end{equation*}
\end{prop}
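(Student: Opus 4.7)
The plan is to combine Varadhan's integral lemma with the Fenchel-Moreau biconjugation theorem, as hinted at by the text preceding the statement. Both ingredients are black-boxed references, so the argument is ultimately a short chain of formal identities once their hypotheses are checked.

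First, I fix an arbitrary $\varphi\in X^{*}$ and apply Varadhan's lemma (\cite[Thm.~4.3.1]{De-Ze}) to the continuous linear functional $x\mapsto\langle\varphi,x\rangle$ on $X$, using the LDP supplied by hypothesis~(2). Varadhan's lemma requires, on top of the LDP, a moment/tail condition of the form
\begin{equation*}
\limsupl_{n\to\infty}\frac{1}{n}\log{\int_{X}e^{\gamma n\langle \varphi,x\rangle}\der\mu_n(x)}<\infty\quad\text{for some }\gamma>1.
\end{equation*}
This is precisely what hypothesis~(1) delivers: choosing $\gamma=2$, the left-hand side is $\La(2\varphi)$, which is finite by assumption. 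Varadhan's conclusion then reads
\begin{equation*}
\La(\varphi)=\liml_{n\to\infty}\frac{1}{n}\La_n(n\varphi)=\supl_{x\in X}\{\langle \varphi,x\rangle-I(x)\}=I^{*}(\varphi),
\end{equation*}
so the limiting logarithmic moment generating function is the Fenchel-Legendre transform of $I$.

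Next, I would invoke the Fenchel-Moreau theorem (\cite[Thm.~1.11]{Brezis}): for any proper, convex, lower semicontinuous function $f\colon X\to(-\infty,\infty]$ on a locally convex Hausdorff topological vector space, the biconjugate $f^{**}$ coincides with $f$. The rate function $I$ is lower semicontinuous by the very definition of a rate function, and proper and convex by hypothesis~(2); therefore $I^{**}=I$. Combining this with the identity $I^{*}=\La$ established in the previous paragraph gives
\begin{equation*}
I(x)=I^{**}(x)=(I^{*})^{*}(x)=\La^{*}(x)=\supl_{\varphi\in X^{*}}\{\langle \varphi,x\rangle-\La(\varphi)\}\quad\text{for every }x\in X,
\end{equation*}
which is the desired formula.

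Conceptually there is no obstacle: both steps are standard applications of well-documented results. The only point requiring a moment's attention is verifying the tail condition in Varadhan's lemma, and hypothesis~(1) is tailored to make this verification immediate by evaluating $\La$ at $2\varphi$ rather than $\varphi$. Everything else is a formal manipulation of convex conjugates.
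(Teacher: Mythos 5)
Your proposal is correct and follows exactly the route the paper indicates for this imported result (\cite[Thm.~4.5.10]{De-Ze}): Varadhan's integral lemma applied to $x\mapsto\langle\varphi,x\rangle$, with the moment condition supplied by finiteness of $\La(2\varphi)$, gives $\La=I^{*}$, and Fenchel--Moreau biconjugation then yields $I=\La^{*}$. The only point worth making explicit is that Varadhan's lemma as cited assumes the rate function is good (topologically proper, i.e.\ compact sublevel sets), which is what ``proper'' means in hypothesis~(2) in the paper's usage, and that $I$ is automatically not identically $+\infty$ by the LDP upper bound applied to $X$ itself, so Fenchel--Moreau indeed applies.
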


Proposition~\ref{momentgen} justifies the relevance of knowing \emph{a priori} the existence of the full LDP with a proper, convex rate function, as established in our context in Theorem~\ref{main}. 

\section{Proof of the main theorem}
\label{proofmain}

The fundamental geometric ingredient entering the proof of Theorem~\ref{main} is the following geometric observation, initially due to Gou\"{e}zel (\cite[Lem.~2.4]{Gouezel}) for Gromov-hyperbolic groups and later vastly generalized by Yang (\cite[Lem.~2.14]{Yang}) to groups acting isometrically with contracting elements on geodesic metric spaces. We recall that $d$ denotes the graph distance on $\text{Cay}\bigl(G,S\cup\bigcup_{1\leq i\leq r}H_i\bigr)$, so that in particular $d(e,x)=\ell(x)$ for any $x\in G$, while $d'$ is the graph distance on $\text{Cay}(G,S)$. For any real $R>0$, let $B^{G}_{d'}(e,R)$ be the closed $d'$-ball of radius $R$ centered at the identity. 

\begin{prop}[{\cite[Lem.~5.3]{Dussaule}}]
\label{boundeddistortion}
	Let $G$ be a non-elementary relatively hyperbolic group. There exist constants $c,C>0$ such that, for any $x,y\in G$, there exists $\sigma\in B^G_{d'}(e,C)$ such that $d(e,x\sigma y)\geq d(e,x)+d(e,y)-c$.
\end{prop}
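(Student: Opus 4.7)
The plan is to exploit the $\delta$-hyperbolicity of the relative Cayley graph $\Gamma$ together with the non-elementarity of $G$. The quantity that controls the failure of the desired inequality is the Gromov product $(x^{-1}\mid y)_{e} = \tfrac{1}{2}(d(e,x)+d(e,y)-d(e,xy))$, so the target bound is equivalent to finding $\sigma\in B^{G}(e,C)$ for which geodesics $[e,x\sigma]$ and $[x\sigma, x\sigma y]$ essentially align at $x\sigma$, up to a uniform additive error.

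First, using the standard classification of isometries of a Gromov-hyperbolic space, non-elementarity of $G$ supplies two loxodromic elements $h_{1},h_{2}\in G$ acting on $\Gamma$ with pairwise disjoint fixed-point sets on the Bowditch boundary $\partial G$; one can arrange moreover that neither $h_{i}$ lies in any conjugate of a peripheral subgroup. Setting $C=\max(\ell(h_{1}),\ell(h_{2}))$, the three candidate twisters $\sigma\in\{e,h_{1},h_{2}\}$ all lie in $B^{G}(e,C)$. The core claim is that, for at least one of these three choices, the required lower bound $d(e,x\sigma y)\geq d(e,x)+d(e,y)-c$ holds with a uniform constant $c$.

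I would argue this by contradiction. If all three choices failed with a large parameter $T$, then $\delta$-thinness of triangles in $\Gamma$ applied to the concatenated paths $[e,x]\cdot[x,x\sigma]\cdot[x\sigma,x\sigma y]$ and a geodesic joining their endpoints would force the three geodesic segments issuing from $x$ in the directions $xh_{1}$, $xh_{2}$, and the initial portion of a geodesic $[x,xy]$ to $\delta$-fellow travel over a subsegment of length comparable to $T$. Taking $T$ larger than a constant determined by $\delta$ and $\ell(h_1),\ell(h_2)$, two of these three directions would be forced to share a common limit point on $\partial G$, contradicting the disjointness of the fixed sets of $h_{1}$ and $h_{2}$.

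The additional subtlety in the relative setting, which I expect to be the main obstacle, is that the distance $d=\ell$ on $\Gamma$ does not see deep excursions through peripheral cosets in the absolute Cayley graph $\text{Cay}(G,S)$, so a priori the cancellation in $x\sigma y$ could occur via such an excursion rather than via backtracking in $\Gamma$ itself. This is precisely where the Bounded Coset Penetration property is used: two relative quasi-geodesics with common endpoints that both enter a coset $gH_{j}$ must do so at points which are close in $\text{Cay}(G,S)$, hence at controlled distance in $\Gamma$ as well. This absorbs any coset-induced cancellation into a uniform additive error, after which the fellow-travelling analysis above yields the proposition with constants $c,C$ depending only on $\delta$, the BCP constant associated to relative geodesics, and the choice of $h_{1},h_{2}$.
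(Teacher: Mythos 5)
There is a genuine gap, and it sits in the core contradiction step. First, for context: the paper gives no proof of this proposition at all --- it is imported wholesale from Dussaule (Lem.~5.3), which adapts Gou\"{e}zel's Lem.~2.4 --- so the comparison is with the standard argument rather than with anything in the text. Your reduction to bounding a Gromov product and your instinct to use finitely many candidate twisters coming from independent loxodromics are both correct in spirit, but the geometry in step three is wrong. The failure of $d(e,x\sigma y)\geq d(e,x)+d(e,y)-c$ for $\sigma=h_i$ means that the Gromov product $(x^{-1}\mid h_i y)_e$ is large, i.e.\ that geodesics $[e,x^{-1}]$ and $[e,h_i y]$ fellow-travel for a long time \emph{near the basepoint} $e$, not near $x$. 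The segments $[x,xh_1]$ and $[x,xh_2]$ that you propose to compare have length at most $\ell(h_i)\leq C$, so they cannot ``fellow-travel over a subsegment of length comparable to $T$'' once $T$ exceeds $C$, and a bounded segment has no limit point on $\partial G$ to play off against the fixed sets of $h_1,h_2$. (Note also that right multiplication by $h_i$ is not an isometry of the left-invariant word metric, so the axis of $h_i$ runs near $e$ and the points $h_i^n$, not near $x$ and $xh_i$; the dynamics of $h_i$ on $\partial G$ is simply not visible at $x$.)

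The way to make your strategy work is Gou\"{e}zel's counting argument, carried out at the basepoint: pick \emph{three} nontrivial elements $f_1,f_2,f_3$ of large but uniformly bounded length whose pairwise Gromov products $(f_i\mid f_j)_e$ and $(f_i^{-1}\mid f_j^{-1})_e$, $i\neq j$, are small compared with the $\ell(f_i)$. The inequality $(a\mid c)_e\geq\min\{(a\mid b)_e,(b\mid c)_e\}-\delta$ then shows that at most one index can have $(x^{-1}\mid f_i)_e$ above threshold and at most one can have $(f_i^{-1}\mid y)_e$ above threshold, so some $f_i$ has both products small; a short computation using $\ell(xf_iy)=\ell(x)+\ell(f_iy)-2(x^{-1}\mid f_iy)_e$ then gives the claim. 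Your candidate set $\{e,h_1,h_2\}$ cannot support this pigeonhole: $(x^{-1}\mid e)_e=0$ always, so $\sigma=e$ is never excluded by the counting, yet it is precisely the choice that fails when $(x^{-1}\mid y)_e$ is large --- you need three genuinely transverse nontrivial elements, whose existence is exactly what non-elementarity of the action on $\Gamma$ provides. Finally, the BCP discussion is likely a red herring: the statement and the entire Gromov-product argument take place in the hyperbolic (albeit non-proper and non-locally-finite) relative Cayley graph $\Gamma$, so weak relative hyperbolicity plus the existence of the $f_i$ already suffices; no control of peripheral excursions in $\mathrm{Cay}(G,S)$ is needed for this particular lemma.
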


In other words, upon perturbing the product of two elements by a bounded amount, there is a controlled loss of additivity in the length function.

The cardinality of the finite set $B^{G}_{d'}(e,C)$ will be indicated with $|B^{G}_{d'}(e,C)|$ in the sequel.

\medskip
In what follows, we shall denote by $B(y,\eps)$ the open interval $(y-\eps,y+\eps)\subset \R$, for any $y\in \R$ and any $\eps>0$. Furthermore, for any positive integer $k$, we let
\begin{equation*}
k B(y,\eps)=\{k z: z \in B(y,\eps)\}.
\end{equation*}

Hereinafter, $\mu_n$ indicates the law of the random variable $\frac{1}{n}\ell(Y_n)$, for every integer $n\geq 1$.

The lemma that follows is instrumental in proving a lower bound for the asymptotic exponential rate of decay of certain probabilities, once a uniform bound on a non-lacunary sequence of times is known. 

\begin{lem}[{\cite[Lem.~4.1]{Corso}}]
	\label{lowerbound}
	Suppose that there exist $a>0,\gamma \in \R$, a strictly increasing sequence $(n_k)_{k\geq 1}$ of positive integers with $\lim_{k\to\infty}n_{k+1}/n_k=1$, such that 
	\begin{equation}
	\label{multiplicative}
	\mu_{n_k}(B(x,a))\geq e^{n_k\gamma} \text{ for all }k\geq 1.
	\end{equation}
	Then, for all $b>a$, 
	\begin{equation*}
	\lb(B(x,b))\geq \gamma\;.
	\end{equation*}
\end{lem}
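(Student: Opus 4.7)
The plan is to propagate the exponential lower bound along the subsequence $(n_k)$ to every sufficiently large $n$, by bridging the gap from $n_k$ to $n$. Given $n$, let $k$ be the unique index with $n_k \leq n < n_{k+1}$, and set $m = n - n_k$. The hypothesis $n_{k+1}/n_k\to 1$ forces both $n_k/n\to 1$ and $m/n\to 0$ as $n\to\infty$, so any error term proportional to $m$ will be sub-exponential in $n$.

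Write $Y_n = Y_{n_k}\,Z_{n_k,n}$, where $Z_{n_k,n}\coloneqq X_{n_k+1}\cdots X_n$ is independent of $Y_{n_k}$ and identically distributed to $Y_m$. Sub-additivity of $\ell$ and the reverse triangle inequality $\ell(gh)\geq \ell(g)-\ell(h)$ (itself a direct consequence of sub-additivity) give, on the event $\{\tfrac{1}{n_k}\ell(Y_{n_k})\in B(x,a)\}$, the sandwich
\begin{equation*}
\frac{n_k}{n}(x-a)-\frac{\ell(Z_{n_k,n})}{n}\;\leq\;\frac{1}{n}\ell(Y_n)\;\leq\;\frac{n_k}{n}(x+a)+\frac{\ell(Z_{n_k,n})}{n}.
\end{equation*}

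The only remaining obstruction is to control $\ell(Z_{n_k,n})/n$, which need not be small in any naive sense since no moment assumption is placed on $\mu$. This is the main hurdle, and I would circumvent it by the following cylinder trick: fix any $g_0\in\supp\mu$ (so $\mu(g_0)>0$ by admissibility) and intersect with the event $C_n\coloneqq\{X_{n_k+1}=g_0,\dots,X_n=g_0\}$. On $C_n$ we have $Z_{n_k,n}=g_0^m$, hence $\ell(Z_{n_k,n})\leq \ell(g_0)\,m$, and by independence $\mathbb{P}(C_n)=\mu(g_0)^m = e^{-m\,|\log\mu(g_0)|} = e^{o(n)}$, which is negligible on the exponential scale.

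Putting everything together by independence,
\begin{equation*}
\mu_n(B(x,b))\;\geq\;\mathbb{P}\!\left(\tfrac{1}{n_k}\ell(Y_{n_k})\in B(x,a)\right)\mu(g_0)^m\;\geq\;e^{n_k\gamma}\,\mu(g_0)^m,
\end{equation*}
provided $n$ is large enough that the sandwich above places $\tfrac{1}{n}\ell(Y_n)$ inside $B(x,b)$; this is possible precisely because $b>a$ and the two bounds in the sandwich converge to $x\pm a$. Taking $\tfrac{1}{n}\log$ and passing to the $\liminf$, the correction $\tfrac{m}{n}\log\mu(g_0)$ vanishes while $\tfrac{n_k}{n}\gamma\to\gamma$, yielding the claim. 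Observe that Proposition~\ref{boundeddistortion} plays no role in this particular lemma; it will enter later in the main argument to secure almost-additivity in the other direction.
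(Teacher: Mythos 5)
Your argument is correct and complete: the decomposition $Y_n=Y_{n_k}Z_{n_k,n}$ with $n_k\leq n<n_{k+1}$, the two-sided bound from subadditivity (together with symmetry of the generating set, which gives $\ell(g^{-1})=\ell(g)$ and hence the reverse inequality), and the cylinder event forcing the last $m=n-n_k$ increments to equal a fixed $g_0\in\supp\mu$ deal exactly with the absence of moment assumptions, and non-lacunarity makes all correction terms $o(n)$. The paper does not reproduce a proof here but defers to \cite[Lem.~4.1]{Corso}, and your argument is essentially the standard one given there, so there is nothing to add.
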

For a proof of the lemma, we refer to~\cite{Corso}.

\medskip
We shall now show how Proposition~\ref{boundeddistortion} enables us to confine the random walk to certain subsets on which the length function $\ell$ is almost additive.
If $A$ is a subset of $G$ and $k\geq 1$ is an integer, we denote by $A^{k}$ the set of all $k$-tuples $(a_1,\dots,a_k)$ of elements $a_1,\dots,a_k$ drawn from the set $A$. If $\nu$ is a probability measure on a subset $A$ of $G$, $\nu^{k}$ designates the $k$-fold product of $\nu$ with itself, as a probability measure on the product space $A^{k}$.

\smallskip
The following lemma provides an adequate replacement for~\cite[Lem.~4.2]{Corso}.

\begin{lem}
\label{selection}
	Let $F\subset G$ be a subset, $\nu$ be a probability measure on $G$ supported inside $F$. There exist a sequence $(\sigma_i)_{i\geq 1}$ of elements in $B^{G}_{d'}(e,C)$ and subsets $E_j\subset F^j$ for every integer $j\geq 2$ such that the following assertions hold, for every integer $k\geq 2$:
	\begin{enumerate}
		\item for any $k$-tuple $(g_1,\dots,g_k)\in E_k$,
		\begin{equation*}
		\ell(g_1\sigma_1g_2\sigma_2\cdots g_{k-1}\sigma_{k-1}g_k)\geq \ell(g_1)+\cdots +\ell(g_k)-(k-1)c\;;
		\end{equation*} 
		\item $\nu^{k}(E_k)\geq |B^{G}_{d'}(e,C)|^{-(k-1)}$.
	\end{enumerate}
\end{lem}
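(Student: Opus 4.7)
The plan is to construct $(\sigma_i)_{i \geq 1}$ and $E_k$ simultaneously by induction on $k$, combining Proposition~\ref{boundeddistortion} with a pigeonhole argument over the finite ball $B^G(e, C)$ at each step.

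For the base case $k = 2$, I would apply Proposition~\ref{boundeddistortion} pointwise to each pair $(g_1, g_2) \in F^2$ in order to obtain a cover of $F^2$ by the sets
$$A_\sigma = \{(g_1, g_2) \in F^2 : \ell(g_1 \sigma g_2) \geq \ell(g_1) + \ell(g_2) - c\}, \quad \sigma \in B^G(e,C).$$
Since $\sum_{\sigma} \nu^2(A_\sigma) \geq \nu^2(F^2) = 1$, pigeonhole furnishes a $\sigma_1$ with $\nu^2(A_{\sigma_1}) \geq |B^G(e,C)|^{-1}$, and I would take $E_2 = A_{\sigma_1}$.

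For the inductive step $k \to k+1$, suppose $\sigma_1, \ldots, \sigma_{k-1}$ and $E_k$ have already been produced. Writing $u_k(g_1, \ldots, g_k) = g_1 \sigma_1 g_2 \sigma_2 \cdots \sigma_{k-1} g_k$ for brevity, I would define, for each $\sigma \in B^G(e,C)$, the set $B_\sigma \subset E_k \times F$ consisting of those $(g_1, \ldots, g_{k+1})$ for which
$$\ell\bigl(u_k(g_1, \ldots, g_k)\, \sigma\, g_{k+1}\bigr) \geq \ell(u_k(g_1, \ldots, g_k)) + \ell(g_{k+1}) - c.$$
Applying Proposition~\ref{boundeddistortion} to $x = u_k(g_1, \ldots, g_k)$ and $y = g_{k+1}$ shows that the $B_\sigma$ cover $E_k \times F$, so a second pigeonhole step yields $\sigma_k \in B^G(e,C)$ with $\nu^{k+1}(B_{\sigma_k}) \geq \nu^{k}(E_k)/|B^G(e,C)| \geq |B^G(e,C)|^{-k}$. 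I would then set $E_{k+1} = B_{\sigma_k}$; assertion (1) at level $k+1$ follows by combining the inductive hypothesis $\ell(u_k) \geq \sum_{i=1}^{k} \ell(g_i) - (k-1)c$ with the defining inequality of $B_{\sigma_k}$.

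No step presents a substantive obstacle: the geometric content is entirely packaged in Proposition~\ref{boundeddistortion}, and everything else is elementary bookkeeping. The only conceptual point worth stressing is that the factor $|B^G(e,C)|^{-1}$ lost at each pigeonhole step accumulates multiplicatively, yielding precisely the bound $|B^G(e,C)|^{-(k-1)}$ demanded in (2), while the fact that a single sequence $(\sigma_i)_{i \geq 1}$ serves simultaneously for all $k$ is built into the induction, since once chosen the $\sigma_i$ are never modified at later stages.
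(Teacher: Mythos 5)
Your proposal is correct and follows essentially the same route as the paper: apply Proposition~\ref{boundeddistortion} pointwise, pigeonhole over the finite ball $B^{G}(e,C)$ to fix each $\sigma_i$, and iterate, with the almost-additivity at level $k+1$ obtained by concatenating the level-$k$ inequality with the new one for the pair $\bigl(g_1\sigma_1\cdots\sigma_{k-1}g_k,\,g_{k+1}\bigr)$. The only cosmetic difference is that the paper phrases the inductive step via a push-forward measure on the set of products $\overline{E_k}$ rather than working directly with tuples and the product measure as you do; the two formulations yield identical bounds.
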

\begin{proof}
	Define a map $F\times F\to B^{G}_{d'}(e,C)$ assigning to each pair $(x,y)\in F^{2}$ an element $\sigma_{xy}\in B^{G}_{d'}(e,C)$ such that $d(e,x\sigma_{xy}y)\geq d(e,x)+d(e,y)-c$, whose existence is ensured by Proposition~\ref{boundeddistortion}. Applying the union bound to the probability measure $\nu\times \nu$ on $F^{2}$, we can find an element $\sigma_1\in B^{G}_{d'}(e,C)$ and a subset $E_2\subset F^{2}$ such that $\nu\times \nu(E_2)\geq |B^{G}_{d'}(e,C)|^{-1}$ and $\sigma_{xy}=\sigma_1$ for every $(x,y)\in E_2$. Hence the lemma is shown for $k=2$.

	Consider now the set $\overline{E_2}=\{x\sigma_1 y:(x,y)\in E_2 \}\subset G$, and define a map $\overline{E_2}\times F\to B^{G}_{d'}(e,C)$ as before, via Proposition~\ref{boundeddistortion}. We endow $\overline{E_2}$ with the push-forward $\nu_2$ of the restriction of $\nu\times \nu$ to $E_{2}$, normalized to be a probability measure, under the map $\theta_2\colon E_2\ni (x,y)\mapsto x\sigma_1y \in\overline{E_2}$. Arguing as before, there exist an element $\sigma_2\in B^{G}_{d'}(e,C)$ and a subset $\widetilde{E_3}\subset \overline{E_2}\times F$ such that $\nu_2\times \nu(\widetilde{E_3})\geq |B^{G}_{d'}(e,C)|^{-1}$ and $\sigma_{xy}=\sigma_2$ for every $(x,y)\in \widetilde{E_3}$. Set $E_3\coloneqq (\theta\times \text{id}_F)^{-1}(\widetilde{E_3})$, which is a subset of $F^{3}$. If $(g_1,g_2,g_3)\in E_3$, then
	\begin{equation*}
	\ell(g_1\sigma_1g_2\sigma_2g_3)\geq \ell(g_1\sigma_1g_2)+\ell(g_3)-c\geq \ell(g_1)+\ell(g_2)-c+\ell(g_3)-c=\ell(g_1)+\ell(g_2)+\ell(g_3)-2c,
	\end{equation*}
	where the first inequality stems from the fact that $(g_1\sigma_1g_2,g_3)\in \widetilde{E_3}$, while the second one from $(g_1,g_2)\in E_2$. Furthermore, $\nu \times\nu \times \nu(E_3)=\bigl(\nu_2\times \nu(\widetilde{E_3})\bigr)\cdot\bigl( \nu\times \nu(E_2)\bigr)\geq |B^{G}_{d'}(e,C)|^{-2}$.
	
	A straightforward iteration of this procedure gives both assertions of the lemma for every $k$.\linebreak 
\end{proof}

We may now proceed with the proof of existence the weak LDP. The argument follows the proof of~\cite[Prop.~4.3]{Corso} almost verbatim: we leverage the criterion expressed in Proposition~\ref{criterion}. 

\begin{proof}[Proof of Theorem~\ref{main}: existence of weak LDP]
 We aim to establish the validity of the assumption in Proposition~\ref{criterion}. For the sake of reaching a contradiction, suppose that there exists $x\in \R_{\geq 0}$ such that (notice that $I(x)\geq -\limsup_n\frac{1}{n}\log{\mu_n(V)}$ for any open set $V$ containing $x$)
\begin{equation}
\label{contradiction}
I(x)>\supl_{x\in V\text{open}}-\ub(V)\;.
\end{equation}
Necessarily, $x$ is strictly positive; indeed, when $x=0$, subadditivity of $\ell$ implies that the limit $\lim_n\frac{1}{n}\log{\mu_n}(B(0,\eps))$ exists in $[-\infty,0]$ for every $\eps>0$.

Spelling out~\eqref{contradiction}, there exist $\delta,\eta>0$ such that 
\begin{equation}
\label{contr}
-\lb(B(x,\delta))>\biggl(\supl_{\rho>0}-\ub(B(x,\rho))\biggr) +\eta\;.
\end{equation}
Fix a positive real number $\rho$ such that $\rho<\inf\{x,\delta\}$, and select a strictly increasing sequence $(n_j)_{j\geq 1}$ of natural numbers for which
\begin{equation}
\label{secondcontr}
\lb(B(x,\delta))<\frac{1}{n_j}\log{\mu_{n_j}((B(x,\rho)))}\;\; -\eta\;.
\end{equation}
Lightening notation, set
\begin{equation}
\label{notation}
\alpha\coloneqq\lb(B(x,\delta)),\; \beta_j\coloneqq\frac{1}{n_j}\log{\mu_{n_j}((B(x,\rho)))} \text{ for every }j\geq 1.
\end{equation} 
We now intend to show that, as a matter of fact, the inequality $\alpha\geq \beta_{j}-\eta$ holds provided that $j$ is taken to be sufficiently large. In light of~\eqref{secondcontr}, the desired contradiction is then achieved.  

As the random walk is irreducible, we may choose, for each group element $\sigma \in B^{G}_{d'}(e,C)$, an integer $t(\sigma)\geq 1$ and and a real number $p(\sigma)\in (0,1]$ such that $\sigma$ is attained by the random walk within $t(\sigma)$ steps with probability $p(\sigma)$. Set $\overline{t}\coloneqq \sup\{t(\sigma):\sigma \in B^{G}_{d'}(e,C) \}$ and $\overline{p}=\inf\{p(\sigma):\sigma \in B^{G}_{d'}(e,C) \}$; notice that $\overline{t}<\infty$ and $\overline{p}>0$ as $B^{G}_{d'}(e,C)$ is a finite set.

Choose an integer $j_0\geq 1$ so that $n_{j_0}\geq (\overline{t}x+c)(\delta-\rho)^{-1}$,
and define $F\coloneqq\{g\in G:\ell(g)\in n_{j_0}B(x,\rho) \}$, so that  $e^{\beta_{j_0}n_{j_0}}=\P(Y_{n_{j_0}}\in F)$.
It is also convenient to fix a sequence $(\tilde{Y}_i)_{i\geq1}$ of independent copies of $Y_{n_{j_0}}$. We apply Lemma~\ref{selection} to the set $F$ and to the normalized restriction of the law of $Y_{n_{j_0}}$ to the set $F$. There exist thus a sequence $(\sigma_i)_{i\geq 1}$ of elements of $B^{G}_{d'}(e,C)$ and, for every integer $k\geq 2$, a subset $E_k$ of $F^{k}$ such that $\P((\tilde{Y}_1,\dots,\tilde{Y}_k)\in E_k)\geq |B^{G}_{d'}(e,C)|^{-(k-1)}e^{k\beta_{j_0}n_{j_0}}$ and, for any choice of elements $(g_1,\dots,g_k)\in E_k$, 
\begin{equation}
\label{weaklengthadd}
\ell(g_1\sigma_1g_2\sigma_2\cdots g_{k-1}\sigma_{k-1}g_k)\geq \ell(g_1)+\cdots\ell(g_k)-(k-1)c\;.
\end{equation}

Suppose now $(g_1,\dots,g_k)$ is a $k$-tuple in the set $E_k$. Then, combining~\eqref{weaklengthadd} with subadditivity of $\ell$, and setting $\overline{c}=\sup\{c,C \}$, we get 
\begin{equation*}
\ell(g_1)+\cdots +\ell(g_k)-k\overline{c}\leq \ell(g_1\sigma_1g_2\sigma_2\cdots g_{k_1}\sigma_{k-1}g_{k})\leq \ell(g_1)+\cdots \ell(g_k)+k\overline{c}\;,
\end{equation*}
which, together with the fact that $E_k$ is a subset of $F^k$, delivers
\begin{equation*}
k\bigl(n_{j_0}(x-\rho)-\overline{c}\bigr)<\ell(g_1\sigma_1g_2\sigma_2\cdots g_{k_1}\sigma_{k-1}g_{k})<k\bigl(n_{j_0}(x+\rho)+\overline{c}\bigr)\;.
\end{equation*}

The sequence $(n_k)_{k\geq 1}$ defined by $n_k\coloneqq kn_{j_0}+t(\sigma_1)+\cdots t(\sigma_{k-1})$ is non-lacunary, that is, it satisfies the assumption of Lemma~\ref{lowerbound}. Now choose $\rho'<\delta$ such that $n_{j_0}(\rho'-\rho)\geq \overline{t}x+c$; such a $\rho'$ exists by virtue of our choice of $j_0$, and ensures that the length of the element $g_1\sigma_1g_2\sigma_2\cdots g_{k-1}\sigma_{k-1}g_k$ belongs to $n_kB(x,\rho')$ whenever $(g_1,\dots,g_k)\in E_k$, for any $k\geq 2$.

We may now estimate, for every $k\geq 2$,
\begin{equation*}
\begin{split}
\P(\ell(Y_{n_k})\in n_kB(x,\rho'))&\geq \P\bigl((\tilde{Y}_1,\dots,\tilde{Y}_{k})\in E_k\bigr)\cdot p(\sigma_1)\cdots p(\sigma_{k-1})\\
&\geq e^{k\beta_{j_0}n_{j_0}}|B^{G}_{d'}(e,C)|^{-(k-1)}\;\overline{p}^{k-1}\\
&\geq e^{n_k(\beta_{j_0}-\eta)}\;,
\end{split}
\end{equation*}
where the last inequality is a consequence of our choice $n_{j_0}\geq \eta^{-1}\bigl(\log{|B^{G}_{d'}(e,C)|}-\log{\overline{p}}\bigr)$, whereas the first one derives from independence and stationarity of the process $(X_{n})_{n\geq 1}$.
Lemma~\ref{lowerbound} allows to deduce that $\alpha\geq \beta_{j_0}-\eta$, as desired. 

\end{proof}

We now turn to the proof that the rate function $I\colon \R_{\geq 0}\to [0,\infty]$, defined as in~\eqref{suplower} and governing the weak LDP satisfied by the sequence $\bigl(\frac{1}{n}\ell(Y_n)\bigr)_{n\geq 1}$, is convex. The argument is taken from~\cite[Prop.~5.1]{Corso}.

\begin{proof}[Proof of Theorem~\ref{main}: convexity of the rate function]
	Since $I$ is lower semicontinuous, it is sufficient to show (cf.~the proof of~\cite[Prop.~5.1]{Corso}) that it is mid-point convex, namely that
	\begin{equation*}
	I\biggl(\frac{1}{2}x_1+\frac{1}{2}x_2\biggr)\leq \frac{1}{2}I(x_1)+\frac{1}{2}I(x_2) \text{ for any real values }x_2>x_1\geq 0\;.
	\end{equation*} 
	Suppose this is not the case, for the purpose of a contradiction. Then, bearing in mind the equality in~\eqref{supupper} established in the first part of the proof of Theorem~\ref{main}, there exist $0\leq x_1<x_2$ and $\delta,\eta>0$ such that, for any real numbers $\rho_1,\rho_2>0$,
	\begin{equation}
	\label{wrongineq}
	\begin{split}
	\ub\biggl(&B\biggl(\frac{1}{2}x_1+\frac{1}{2}x_2,\delta\biggr)\biggr)
	<\\
	&<\frac{1}{2}\biggl(\lb(B(x_1,\rho_1))+\lb(B(x_2,\rho_2))\biggr)-\eta\;.
	\end{split}
	\end{equation}. 
	Fix $\rho\coloneqq \rho_1=\rho_2<\delta$. For a sufficiently large $n_0$ and every $n\geq n_0$, we claim that there exists $\phi(n)\in\{2n,\dots,2n+\overline{t} \}$ such that 
	\begin{equation}
	\label{rightineq}
	\frac{1}{\phi(n)}\log{\mu_{\phi(n)}\biggl(B\biggl(\frac{1}{2}x_1+\frac{1}{2}x_2,\delta\biggr)\biggr)}\geq \frac{1}{2}\biggl(\frac{1}{n}\log{\mu_n(B(x_1,\rho))}+\frac{1}{n}\log{\mu_n(B(x_2,\rho))}\biggr)-\eta\;.
	\end{equation}
	
	Letting $n$ vary over an arithmetic progression for which the corresponding sequence of $\phi(n)$ is strictly increasing, it is clear that we obtain a contradiction to~\eqref{wrongineq}.
	
	It remains to prove the claim just stated. Fix some integer $n_0\in \N$ with
	\begin{equation*}
	n_0\geq \frac{1}{2(\delta-\rho)}\sup\biggr\{C,c+\overline{t}\biggl(\frac{x_1+x_2}{2}\biggr) \biggr\}\;.
	\end{equation*}
	Let $n\geq n_0$, and define $F_i=\{g\in G:\ell(g)\in nB(x_i,\rho) \}$, $i=1,2$. Let also $\tilde{Y}_1,\tilde{Y}_2$ be two independent copies of $Y_{n}$. Adapting the proof of Lemma~\ref{selection} appropriately, we deduce that there is an element $\sigma\in B^{G}_{d'}(e,C)$ and a subset $E\subset F_1\times F_2$ such that
	\begin{equation*}
	 \P((\tilde{Y}_1,\tilde{Y}_2)\in E)\geq |B^{G}_{d'}(e,C)|^{-1}\P(Y_n\in F_1)\P(Y_{n}\in F_2)
	\end{equation*} 
	 and
	\begin{equation}
	\label{weaklengthadditivity}
	\ell(g_1\sigma g_2)\geq \ell(g_1)+\ell(g_2)-c \quad\text{for any }(g_1,g_2)\in E\;.
	\end{equation}
	In particular, \eqref{weaklengthadditivity} together with the choice of $n_0$ imply that
	\begin{equation*} \ell(g_1\sigma g_2)\in (2n+t(\sigma))B((x_1+x_2)/2,\delta)\quad \text{whenever  }(g_1,g_2)\in E\;;
	\end{equation*} \eqref{rightineq} is thus satisfied for $\phi(n)\coloneqq 2n+t(\sigma)$.
 
\end{proof}

\subsection*{Concluding remarks} It is currently unknown to the author whether Proposition~\ref{boundeddistortion} holds beyond the realm of relative word distances on relatively hyperbolic groups and of isometric actions with contracting elements (cf.~Remark~\ref{moregeneral}) . In the presence of such a generalization, the argument leading to Theorem~\ref{main} would carry over unaffectedly. On the other hand, it would be desirable to dispense with the irreducibility assumption on the random walk, replacing it with a milder non-degeneracy condition. In~\cite{Corso} the peculiar structure of free products allows for a considerable weakening of irreducibility; here, a further refinement of Proposition~\ref{boundeddistortion} seems to be needed.  

\footnotesize

\end{document}